\newcommand{\R}{{\mathbb  R}}
\newcommand{\T}{\mathbb{T}}
\newcommand{\N}{{\mathbb  N}}
\newcommand{\C}{{\mathbb  C}}
\newcommand{\dd}{{\mathrm{d}}}
\newcommand{\bI}{\mathbf{I}}
\newcommand{\fdot}{\,\cdot\,}
\newcommand{\cC}{\mathcal{C}}
\newcommand{\bO}{\mathbf{0}}
\newcommand{\bB}{\mathbf{B}}
\newcommand{\bM}{\mathbf{M}}
\newcommand{\bH}{\mathbf{H}}
\newcommand{\balpha}{{\boldsymbol{\alpha}}}
\newcommand{\bx}{\mathbf{x}}
\newcommand{\by}{\mathbf{y}}
\newcommand{\wt}{\widetilde}
\newcommand{\cH}{\mathcal{H}}
\newcommand{\1}{\mathbf{1}}
\DeclareMathOperator{\Ran}{Ran}
\DeclareMathOperator{\im}{Im}
\DeclareMathOperator{\tr}{tr}
\newcommand{\ci}[1]{_{_{\scriptstyle #1}}}
\newcommand{\ti}[1]{_{\scriptstyle \text{\rm #1}}}
\numberwithin{equation}{section}
\theoremstyle{plain}
\newtheorem{theo}{Theorem}[section]
\newtheorem{lem}[theo]{Lemma}
\theoremstyle{definition}
\theoremstyle{remark}
\newtheorem*{ex*}{Example}
\newtheorem*{exs*}{Examples}
\newtheorem{rem}[theo]{Remark}
\newtheorem*{rem*}{Remark}
\newtheorem*{rems*}{Remarks}
\newcounter{vremennyj}
\title[Aronszajn--Donoghue theorem]{Dimension of the exceptional set in the  Aronszajn--Donoghue theorem for finite rank perturbations }
\author{Constanze~Liaw}
\address{C.~Liaw: Department of Mathematical Sciences, University of Delaware, 311 Ewing Hall, Newark, DE 19716, USA and CASPER, Baylor University, One Bear Place \#97328,      
 Waco, TX  76798, USA}
\email{liaw@udel.edu}
\thanks{Work of C.~Liaw is supported  by the National Science Foundation under the grant  DMS-1802682 }
\author{Sergei~Treil}
\address{S.~Treil: Department of Mathematics, Brown University   
151 Thayer
St./Box 1917,      
 Providence, RI  02912, USA}
\email{treil@math.brown.edu}
\thanks{Work of S.~Treil is supported  by the National Science Foundation under the grant  DMS-1856719.}
\author{Alexander Volberg}
\address{A.~Volberg: Department of Mathematics, Michigan State University, East Lansing, MI. 48824, USA}
\email{volberg@msu.edu}
\thanks{Work of A.~Volberg is supported  by the National Science Foundation under the grant  DMS-1900286.}
\begin{document}

\begin{abstract}
The classical Aronszajn--Donoghue theorem states that for a rank one perturbation of a self-adjoint operator (by a cyclic vector) the singular parts of the  spectral measures of the original and perturbed operators are mutually singular. As simple direct sum type examples show, this result does not hold for finite rank perturbations. However, the set of exceptional perturbations is pretty small. 

Namely, for a family of rank $d$ perturbations $A_\balpha := A + \bB \balpha \bB^*$, $\bB:\C^d\to \cH$, with $\Ran\bB$ being cyclic for $A$, parametrized by $d\times d$ Hermitian matrices $\balpha$, the singular parts of the spectral measures of $A$ and $A_\balpha$ are mutually singular for all $\balpha$ except for a small exceptional set $E$. It was shown earlier by the first two authors, see \cite{LTJST}, that $E$ is a subset of measure zero of the space $\bH(d)$ of $d\times d$ Hermitian matrices. 

In this paper we show that the set $E$ has small Hausdorff dimension, $\dim E \le \dim\bH(d)-1 = d^2-1$. 
\end{abstract}

\maketitle

\setcounter{tocdepth}{1}
\tableofcontents

\setcounter{section}{-1}

\section{Introduction}

Consider a family of finite rank (self-adjoint) perturbations of a self-adjoint operator $A$ (possibly unbounded), 
\begin{align}
\label{A_alpha}
A_\balpha := A + \bB \balpha \bB^*, 
\end{align}
parametrized by self-adjoint operators (Hermitian matrices) $\balpha : \C^d \to \C^d$. Here we assume that $\bB:\C^d  \to \cH$ is an injective operator. We do not need to assume that $\bB$ is bounded, it is sufficient to assume that the operator $ (\bI + |A|)^{-1/2} \bB$ is bounded. In this case we are dealing with the so-called ``form bounded'' perturbations; the theory of such perturbations is well-developed, and does not differ much from the case of bounded perturbations (see e.g.~\cite{kurasovbook}). 

Isolating the interesting from the perturbation theory point of view case we always assume that $\Ran \bB $ is cyclic for $A$. In the case of rank-one perturbations ($d=1$) the classical Aronszajn--Donoghue Theorem states that the singular parts of the spectral measures of $A$ and $A_\balpha$ are always mutually singular. 

As simple direct sum examples show, this is not the case for $d>1$. So, the singular parts of the spectral measures of the original and perturbations, and the singular parts of the \emph{scalar} spectral measures of $A$ and $A_\balpha$   are not always mutually singular. However, it was proved in \cite{LTJST} that they  are mutually singular for almost all perturbations. 

Moreover, it was proved in \cite{LTJST} that if $\balpha_1 >\bO$ (i.e.~positive definite) and $\balpha(t)=\balpha_0 + t\balpha_1 $,  then given a singular measure $\nu$  the spectral measures $\mu^t$ (equivalently their singular parts $\mu^t\ti s$) of the operators $A_{\balpha(t)}$ are mutually singular with $\nu$ for all $t\in\R$ except, maybe, countably many. 

This leads one to suspect that in fact one can say more about the exceptional set, i.e. the set of all Hermitian $d\times d$ matrices $\balpha$ for which the singular parts of the scalar spectral measures%
\footnote{By a scalar spectral measure we always mean a scalar spectral measure of maximal dimension.}
of $A$ and of  $A_\balpha$ are \emph{not mutually singular}.  It looks like a reasonable conjecture  that the exceptional set is not just a set of measure $0$, but it in fact  has dimension strictly less than the full dimension $d^2$ of the set $\bH(d)$ of all $d\times d $ Hermitian matrices. (It is not hard to see that the Hausdorff dimension of the set  $\bH(d)$ of all $d\times d $ Hermitian matrices is exactly $d^2$.) 

This turns out to be the case; the main result of this note is the following theorem:

\begin{theo}
\label{t:main}
Let operators $A_\balpha$ be given by \eqref{A_alpha}, and let $\Ran \bB$ be cyclic for $A$. 
Given a singular measure $\nu$ the scalar spectral measures $\mu^\balpha$ of the operators $A_\balpha$ 
are mutually singular with $\nu$ for all $\balpha\in\bH(d)\setminus E$, where the exceptional set $E$ has  Hausdorff dimension at most $\dim \bH(d)-1 = d^2 - 1$. 
\end{theo}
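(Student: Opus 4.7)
The plan is to combine the one-dimensional countability result of \cite{LTJST} with Marstrand's slicing theorem on $\bH(d) \cong \R^{d^2}$. First I would verify that the exceptional set $E = E_\nu := \{\balpha \in \bH(d) : \mu^\balpha\ti s \not\perp \nu\}$ is a Borel (or at least analytic) subset of $\bH(d)$. This should follow from the Aronszajn--Donoghue-type description of $\mu^\balpha\ti s$ in terms of the non-tangential boundary values of the matrix Borel transform $M(z) = \bB^\ast (A-z)^{-1} \bB$, together with the rational dependence of the perturbed $m$-function on $\balpha$.

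\textbf{Slicing input from \cite{LTJST}.} The cited result says that for every $\balpha_0 \in \bH(d)$ and every positive-definite $\balpha_1$, at most countably many $t \in \R$ satisfy $\balpha_0 + t \balpha_1 \in E$. Equivalently, for any affine line $V + a \subset \bH(d)$ whose direction $V \in G(d^2, 1)$ is spanned by a positive-definite matrix, the slice $E \cap (V + a)$ is at most countable and in particular of Hausdorff dimension $0$. Since the positive-definite cone is open and nonempty in $\bH(d)$, the subset of $G(d^2, 1) \cong \R P^{d^2 - 1}$ consisting of positive-definite directions is a nonempty open set and hence has positive measure with respect to the natural rotation-invariant probability measure $\gamma_{d^2, 1}$.

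\textbf{Marstrand and the contradiction.} Assume toward a contradiction that $\dim_H E > d^2 - 1$ and pick $s$ with $d^2 - 1 < s < \dim_H E$, so that $\mathcal{H}^s(E) = +\infty$. Marstrand's slicing theorem (e.g.\ Mattila, \emph{Geometry of Sets and Measures in Euclidean Spaces}, Theorem~10.11), applied in ambient dimension $n = d^2$ with $1$-dimensional slices, then yields that for $\gamma_{d^2, 1}$-almost every $V \in G(d^2, 1)$,
\begin{equation*}
\mathcal{H}^{d^2 - 1}\!\left( \left\{ a \in V^\perp : \mathcal{H}^{\,s - (d^2 - 1)}\bigl( E \cap (V + a) \bigr) > 0 \right\} \right) > 0.
\end{equation*}
Intersecting this full-measure set of directions with the positive-measure set of positive-definite directions produces a positive-definite line direction $V$ along which some slice $E \cap (V + a)$ has positive Hausdorff dimension, and is therefore uncountable---contradicting the slicing input from \cite{LTJST}. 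Hence $\dim_H E \le d^2 - 1$.

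\textbf{Main obstacle.} Once the Borel measurability of $E$ is established, the argument is essentially an off-the-shelf juxtaposition of \cite{LTJST} with Marstrand's slicing theorem; the genuine work is in Step~1. An alternative route would be to avoid Marstrand by writing $E$ as a countable union of Borel graphs of ``root functions'' $t_k(\balpha_0)$ coming from the analytic equation whose zeros parametrize $E$ along each positive-definite line, and then bounding each graph's Hausdorff dimension by $d^2 - 1$ via regularity (e.g.\ real-analyticity) of the $t_k$; this looks noticeably more delicate than the slicing approach.
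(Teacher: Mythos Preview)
Your overall architecture---countability of $E$ along lines in positive-definite directions, combined with Marstrand--Mattila slicing in $\R^{d^2}$---is exactly the paper's. The gap is in what you yourself flag as the main obstacle: the Borel (or analytic) measurability of $E$.

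The paper states explicitly that it is \emph{not known} whether $E$ is Borel or even Suslin, and your proposed route (``should follow from the Aronszajn--Donoghue-type description of $\mu^\balpha\ti s$ in terms of boundary values of $M(z)$'') does not close this. The boundary-value description gives, for each fixed $\balpha$, a Borel carrier $S_\balpha$ of $\mu^\balpha\ti s$; but ``$\mu^\balpha\ti s \not\perp \nu$'' is \emph{not} the same as ``$\nu(S_\balpha)>0$'', since $\mu^\balpha\ti s$ may well be carried by a $\nu$-null subset of $S_\balpha$. The condition $\mu^\balpha\ti s \not\perp \nu$ hides an existential quantifier over Borel sets, and there is no evident way to turn it into a countable pointwise condition on $(\balpha,x)$.

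The paper's fix is not to prove $E$ Borel but to \emph{enlarge} it: one builds an explicit Borel set $F\subset \bH(d)\times\R$ from limits of Poisson extensions along a fixed sequence $y_n\downarrow 0$, sets $\wt E:=\{\balpha:\nu(F_\balpha)>0\}$ (Borel by Tonelli), and checks $E\subset\wt E$. The price is that the countability input from \cite{LTJST}, which was proved for $E$, must be \emph{re-proved} for the larger set $\wt E$. This forces an upgrade of the key orthogonality relation $\balpha\,\Ran W(x)\perp \Ran W_\balpha(x)$ from a $(\mu\ti s+\mu^\balpha\ti s)$-a.e.\ statement (as in \cite{LTJST}) to a statement valid for \emph{every} $x$ at which the relevant Poisson limits exist; the paper does this via the matrix two-weight $A_2$ bound and then reruns the separability argument in $L^2(\nu;\C^d)$. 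That upgrade, not the slicing, is where the real work is.
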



\section{An application of the Marstrand--Mattila theorem}
\label{s:Marstrand-Mattila}
The tool to show that the dimension of the exceptional set is at most $\dim \bH(d) - 1 =d^2 -1$ is already available. 
Namely, the following result was proved in \cite[Lemma 6.4]{Mattila1975}. Below, $\cH^s$ denotes the $s$-dimensional Hausdorff measure, and $G(m,n)$ denotes the set of all $m$-dimensional subspaces of $\R^n$. 

\begin{lem}
	\label{l:Mattila--Marstrand} Let $E$ be an $\cH^s$ measurable subset of $\R^n$ with $0<\cH^s(E)<\infty$. Then 
	\[
	\dim( E\cap (V+x)) \ge s+m-n
	\]
	for almost all $(x,V)\in E\times G(m,n)$. 
\end{lem}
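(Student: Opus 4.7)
The plan is to establish the slicing lemma via Frostman's lemma together with the Riesz-energy technique of Mattila. We may assume $s>n-m$, since otherwise the bound $\dim\bigl(E\cap(V+x)\bigr)\ge s+m-n$ is vacuous. By Frostman's lemma applied to $E$, for each $s'\in(n-m,s)$ we obtain a nontrivial finite Borel measure $\mu$ with $\supp\mu\subseteq E$ and $\mu(B(y,r))\le C r^{s'}$ for all $y,r$; equivalently, the Riesz $s'$-energy
\[
I_{s'}(\mu):=\iint|x-y|^{-s'}\,d\mu(x)\,d\mu(y)
\]
is finite. The strategy is to construct, for almost every slice $V+a$, a nontrivial measure on $E\cap(V+a)$ of finite $(s'-(n-m))$-energy; the standard converse to Frostman's lemma then gives $\dim\bigl(E\cap(V+a)\bigr)\ge s'-(n-m)$, and sending $s'\nearrow s$ along a countable sequence yields the claimed bound $\ge s+m-n$ off a common exceptional set.

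For each $V\in G(m,n)$, disintegrate $\mu$ along the orthogonal projection $\pi_{V^\perp}\colon\R^n\to V^\perp$ to obtain sliced measures $\mu_{V,a}$, $a\in V^\perp$, supported on $E\cap(V+a)$ and satisfying $\int f\,d\mu=\int_{V^\perp}\int f\,d\mu_{V,a}\,d\cH^{n-m}(a)$. The crucial estimate is the integrated-energy inequality
\[
\int_{G(m,n)}\int_{V^\perp}I_{s'-(n-m)}(\mu_{V,a})\,d\cH^{n-m}(a)\,d\gamma_{m,n}(V)\le C\cdot I_{s'}(\mu).
\]
To prove it, unfold the inner energy via the disintegration identity to rewrite the left-hand side as a double $\mu$-integral of a $V$-dependent kernel, and integrate over $V$. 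By rotational invariance of $\gamma_{m,n}$, the $V$-average reduces to the classical Grassmannian identity
\[
\int_{G(m,n)}\phi\bigl(\pi_{V^\perp}(e)\bigr)\,d\gamma_{m,n}(V)\;\sim\;c\,|e|^{-(n-m)}\quad\text{as}\quad\phi\to\delta_0,
\]
so that the combined kernel becomes $|x-y|^{-(s'-(n-m))-(n-m)}=|x-y|^{-s'}$ and the estimate follows.

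Fubini then yields $I_{s'-(n-m)}(\mu_{V,a})<\infty$ for $\gamma_{m,n}\times\cH^{n-m}$-almost every $(V,a)$, with $\mu_{V,a}\ne 0$ on a positive-measure set of $(V,a)$; the converse Frostman inequality delivers $\dim(\supp\mu_{V,a})\ge s'-(n-m)$ and hence $\dim(E\cap(V+a))\ge s'-(n-m)$. To transfer from the $(V,a)$-parametrization to the $(x,V)$-parametrization of the lemma, note that by Marstrand's projection theorem, $(\pi_{V^\perp})_*\mu\ll\cH^{n-m}$ for $\gamma_{m,n}$-a.e.\ $V$ (valid since $s'>n-m$), so the exceptional $a$-set on $V^\perp$ pulls back under $x\mapsto\pi_{V^\perp}(x)$ to a $\mu$-null set of $x$, and hence (after restricting to a compact subset of $E$ carrying positive $\cH^s$-mass) to a $\cH^s|_E$-null set; a standard exhaustion over countably many Frostman measures concentrated on subsets of $E$ upgrades this to a $\cH^s|_E\times\gamma_{m,n}$-null exceptional set of $(x,V)$. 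The main technical obstacle is the Grassmannian average identity used in the middle paragraph: by rotational invariance it reduces to a one-dimensional computation involving the Beta$\bigl((n-m)/2,m/2\bigr)$ distribution of $|\pi_{V^\perp}(e)|^2$ for unit $e$; making the $\delta$-function rigorous requires smoothing with a bump of width $\delta$, performing the Fubini computation, and passing to the limit $\delta\to 0$, and the exponent $-(n-m)$ emerges precisely from the density of this Beta law near zero. Once that identity is in hand, the rest of the argument is routine.
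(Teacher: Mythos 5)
The paper does not actually prove Lemma~\ref{l:Mattila--Marstrand}: it is quoted as Lemma~6.4 of Mattila's 1975 paper (with the planar case due to Marstrand), and the authors only use it as a black box. So your proposal is not competing with an argument in this paper; what you have written is, in outline, Mattila's own potential-theoretic proof --- Frostman's lemma, sliced measures, an integrated energy inequality over the Grassmannian, and a Fubini/exhaustion step to localize the exceptional set. The overall strategy is the correct and standard one.

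That said, three of your one-line justifications would fail if executed literally. First, the energy $I_{s'-(n-m)}(\mu_{V,a})$ is \emph{quadratic} in the sliced measure, so it cannot be ``unfolded via the disintegration identity,'' which is linear: unfolding one copy of $\mu_{V,a}$ leaves an inner integral $\int |x-y|^{-t}\,d\mu_{V,\pi_{V^\perp}(y)}(x)$ over a single slice, which is not a $\mu$-integral against any kernel. The standard fix is to define $\mu_{V,a}$ as the weak limit of $(2\delta)^{-(n-m)}\mu$ restricted to the slab $\{x:|\pi_{V^\perp}(x)-a|\le\delta\}$, run the whole computation at scale $\delta$, and finish with lower semicontinuity of energy and Fatou. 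Second, and relatedly, the pointwise Grassmannian integral $\int_{G(m,n)}|\pi_{V^\perp}(e)|^{-(n-m)}\,d\gamma_{m,n}(V)$ \emph{diverges} for a unit vector $e$: the Beta$((n-m)/2,\,m/2)$ density of $|\pi_{V^\perp}(e)|^2$ makes the integrand exactly non-integrable at the critical exponent $n-m$. So the ``identity'' you display is only usable in the distribution-function form $\gamma_{m,n}\{V:|\pi_{V^\perp}(z)|\le\delta\}\le c\,(\delta/|z|)^{n-m}$, which is precisely what the scale-$\delta$ computation consumes; your remark about smoothing gestures at this but the order of limits (keep $\delta$ fixed until after the $V$- and $a$-integrations) is essential, not cosmetic. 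Third, upgrading ``$\mu$-a.e.\ $x$'' to ``$\cH^s|_E$-a.e.\ $x$'' by exhausting over countably many Frostman measures requires knowing that the bad set of pairs $(x,V)$ is measurable enough to extract compact subsets of positive $\cH^s$-measure from its sections; this measurability point is the fussiest part of Mattila's write-up and should not be dismissed as routine. None of these is a fatal flaw --- each is repaired exactly as in Mattila's Chapter on intersections --- but as written the middle paragraph of your argument is not yet a proof.
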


In this lemma \emph{$\cH^s$ measurable} means \emph{Carath\'{e}odory measurable} with respect to the outer measure $\cH^s$. 

This result was proved by J.~M.~Marstrand in \cite{Marstrand1954} for $n=2$ and by P.~Mattila \cite{Mattila1975} for general  $n\in\N$. 

A formal application of this result would immediately give us the desired estimate on the dimension (see the reasoning at the end of this section). However, we do not know anything about the exceptional set $E$ (see the definition below in Section \ref{s:prelims}); we do not know whether it is $\cH^s$ measurable for $s>d^2-1$. But what is more important, we cannot say that $\cH^s(E)<\infty$ for $s>d^2-1$.

However, as it was discussed in \cite{Mattila1975}, if one assumes that $E$ is an analytic (a.k.a.~a Suslin) set, one can reduce the assumption to $\cH^s(E)>0$. The reason for this reduction is that, by the theorem of R.~O.~Davies \cite{Davies1952}, given an analytic set $E\subset \R^n$ with $\cH^s(E)>0$ one can find a compact $K\subset E$ with $0<\cH^s(E)<\infty$. 

So, while it was not stated explicitly, the following statement  was proved in \cite{Mattila1975}. 

\begin{lem}
	\label{l:Mattila--Marstrand 02} Let $E$ be an analytic (Suslin) subset of $\R^n$ such than $\cH^s(E)>0$. Then 
	\[
	\dim( E\cap (V+x)) \ge s+m-n
	\]
	for almost all $(x,V)\in E\times G(m,n)$.
\end{lem}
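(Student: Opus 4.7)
The plan is to deduce Lemma~\ref{l:Mattila--Marstrand 02} from Lemma~\ref{l:Mattila--Marstrand} by means of R.~O.~Davies's capacitability theorem \cite{Davies1952}, exactly as sketched by the authors in the paragraph preceding the statement. This is a short technical extension; the substantive content is entirely in Marstrand's and Mattila's original result.

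First, I would apply Davies's theorem: since $E$ is analytic (Suslin) with $\cH^s(E)>0$, there is a compact subset $K\subset E$ with $0<\cH^s(K)<\infty$. Being compact, $K$ is Borel and hence Carath\'{e}odory measurable with respect to the outer measure $\cH^s$. Thus $K$ meets the hypotheses of Lemma~\ref{l:Mattila--Marstrand}, which yields
\[
\dim\bigl(K\cap(V+x)\bigr)\ge s+m-n
\]
for $\cH^s|_K\otimes\gamma_{m,n}$-almost every $(x,V)\in K\times G(m,n)$, where $\gamma_{m,n}$ is the natural rotation-invariant probability measure on $G(m,n)$. Since $K\subset E$, the trivial inclusion $K\cap(V+x)\subset E\cap(V+x)$ upgrades this to
\[
\dim\bigl(E\cap(V+x)\bigr)\ge s+m-n
\]
on the same positive-measure subset of $E\times G(m,n)$.

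The only delicate point is interpreting ``almost all $(x,V)\in E\times G(m,n)$'' when $\cH^s(E)=\infty$ and the product measure $\cH^s|_E\otimes\gamma_{m,n}$ is not $\sigma$-finite. For the subsequent application in the paper --- ruling out $\dim E>d^2-1$ for the exceptional set --- what is actually used is only that the dimension lower bound holds on some subset of $E\times G(m,n)$ of positive product measure, which is precisely what the compact subset $K$ supplies. Thus there is no real obstacle to overcome beyond correctly quoting Davies's theorem.
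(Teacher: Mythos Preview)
Your proposal is correct and follows exactly the approach the paper indicates: the paper does not give a separate proof of this lemma but simply refers to the reduction via Davies's theorem described in the paragraph immediately preceding the statement, which is precisely what you carry out. Your additional remark about the interpretation of ``almost all'' when $\cH^s(E)=\infty$ is a useful clarification that the paper leaves implicit, and your observation that only a positive-measure statement is needed for the contradiction in the proof of Theorem~\ref{t:main} is accurate.
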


We will not be giving the definition of analytic (Suslin) sets; for our purposes it is sufficient to know that every Borel set in $\R^n$ is analytic, cf. \cite[Sec.~6.6]{Bogachev2007vol2}. 

We do not know if the exceptional set  $E$ is analytic. However we will be able to prove the following statement, which essentially is the main result of this paper. 
\begin{theo}
	\label{t:Borel}
	There exists a Borel set $\wt E\subset \bH(d)$ such that $E\subset \wt E$ and such that the intersection of $\wt E$ with any line with the direction from the open cone of positive definite Hermitian matrices is at most countable.  
\end{theo}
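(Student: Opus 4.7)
The plan is to exhibit $E$ itself as a Borel set, characterized by the boundary behavior of Poisson integrals of the spectral measures $\mu^\balpha$, so that one may take $\wt E = E$ and invoke \cite{LTJST} directly.

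For the setup I would fix any basis $e_1,\ldots,e_d$ of $\C^d$ and set $\xi_i := \bB e_i$. Since $A_\balpha - A = \bB\balpha\bB^*$ maps $\cH$ into $\Ran\bB = \spn\{\xi_1,\ldots,\xi_d\}$, a short invariant-subspace argument shows that the cyclic subspace of $A_\balpha$ generated by $\{\xi_i\}$ coincides with that of $A$, hence equals all of $\cH$. Therefore
\[
\mu^\balpha := \sum_{i=1}^d \mu^\balpha_{\xi_i}
\]
is a scalar spectral measure of $A_\balpha$ of maximal dimension for every $\balpha$, and the condition $\mu^\balpha\ti s \perp \nu$ depends only on the measure class of $\mu^\balpha$.

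Next I would construct the Borel description. By the Krein--Woodbury resolvent formula, $\balpha \mapsto (A_\balpha - z)^{-1}$ is real-analytic on $\bH(d)$ for each fixed $z$ with $\im z > 0$. Hence for every $y > 0$ the Poisson integral
\[
P_y \mu^\balpha(x) = \frac{1}{\pi}\sum_{i=1}^d \im \langle \xi_i, (A_\balpha - x - iy)^{-1} \xi_i \rangle
\]
is jointly continuous in $(\balpha, x) \in \bH(d) \times \R$. Setting
\[
h^\balpha(x) := \limsup_{\substack{y \to 0+ \\ y \in \Q}} \frac{P_y\mu^\balpha(x)}{P_y \nu(x)}
\]
gives a Borel function on $\bH(d) \times \R$, and I will take
\[
\wt E := \bigl\{\balpha \in \bH(d) : \nu\bigl(\{x : h^\balpha(x) > 0\}\bigr) > 0\bigr\},
\]
which is Borel since for any Borel $B \subset \bH(d) \times \R$ the map $\balpha \mapsto \nu(B_\balpha)$ is Borel.

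For the identification $\wt E = E$ and the conclusion, I would invoke the classical boundary-behavior theorem for Poisson (equivalently Cauchy) integrals: for any Radon measures $\mu, \nu$ on $\R$, $\lim_{y \to 0+} P_y\mu(x)/P_y\nu(x) = (d\mu_{\ll\nu}/d\nu)(x)$ for $\nu$-a.e.\ $x$. This gives $h^\balpha = d\mu^\balpha/d\nu$ $\nu$-a.e., so $\nu(\{h^\balpha > 0\}) > 0$ iff $\mu^\balpha$ has a non-trivial $\nu$-absolutely continuous component, iff (since $\nu$ is Lebesgue-singular) $\mu^\balpha\ti s \not\perp \nu$, iff $\balpha \in E$. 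Hence $\wt E = E$, and the line-intersection property follows directly from \cite{LTJST}. The main obstacle is the inclusion $\wt E \subset E$, which relies on the nontrivial Poltoratski-type statement $P_y \sigma / P_y\nu \to 0$ $\nu$-a.e.\ whenever $\sigma \perp \nu$; this holds for an arbitrary Radon $\nu$ with no regularity hypothesis. As an alternative, one could replace Poisson averages by averages over intervals and use Besicovitch differentiation, but then verifying Borel measurability of $\balpha \mapsto \mu^\balpha((a,b))$ via Stieltjes inversion requires care at the interval endpoints to handle atoms.
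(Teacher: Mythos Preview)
Your route is genuinely different from the paper's and, if it goes through, actually proves more: you are claiming that $E$ itself is Borel, a point the authors explicitly leave open in Section~\ref{s:prelims}. The paper does not attempt this; instead it builds a (possibly strictly) larger Borel set $\wt E$ by recording, for each $(\balpha,x)$, whether $\mu^\balpha(x+iy_n)\to\infty$ \emph{and} the matrix ratio $\bM_\balpha(x+iy_n)/\mu^\balpha(x+iy_n)$ converges to a nonzero limit $W_\balpha(x)$. That second, matrix-valued condition is there precisely so that the countable-intersection property can be proved \emph{for $\wt E$ directly}, via the two-weight matrix $A_2$ bound and the orthogonality Lemma~\ref{l:alpha orthog}, rather than inherited from $E$. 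Your construction bypasses all of the matrix machinery: once $\wt E=E$, the line-intersection statement is literally the result already in \cite{LTJST}.

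The hinge, as you correctly identify, is the inclusion $\wt E\subset E$, which needs $\limsup_{y\to 0}P_y\sigma(x)/P_y\nu(x)=0$ for $\nu$-a.e.\ $x$ whenever $\sigma\perp\nu$, with no regularity assumption on $\nu$. This is true on $\R$ but is not quite Poltoratski's theorem (which is about normalized Cauchy transforms and the absolutely continuous case $f\,d\nu$); you should supply an argument. One clean way: writing $P_y(t)=\int_{|t|}^\infty(-P_y'(s))\,ds$ gives $P_y\sigma(x)=\int_0^\infty\sigma(B(x,s))(-P_y'(s))\,ds$ and likewise for $\nu$, so on $\{s<r_0\}$ the ratio is controlled by $\sup_{s<r_0}\sigma(B(x,s))/\nu(B(x,s))\to 0$ (Besicovitch), while the tail contributes at most a constant times $r_0^2/\nu(B(x,r_0))$; a short Vitali covering argument shows $\liminf_{r\to 0}\nu(B(x,r))/r^2=\infty$ for $\nu$-a.e.\ $x$ in $\R$, which kills the tail. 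Two minor points: in the form-bounded setting $\xi_i=\bB e_i$ need not lie in $\cH$, so you should define $P_y\mu^\balpha$ via $\tr\bigl(\pi^{-1}\im\,\cC\bM_\balpha\bigr)$ as in Section~\ref{s:prelims} rather than via $\langle\xi_i,(A_\balpha-z)^{-1}\xi_i\rangle$; and since the authors state they do not know whether $E$ is Borel, your argument should be presented as a proof rather than a citation.
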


\begin{proof}[Proof of Theorem \ref{t:main} using Theorem \ref{t:Borel}]
For a moment let us  assume that $\dim \wt E> \dim \bH(d)-1=d^2 -1$. Then $\cH_s(\wt E)>0$ for some $s>d^2-1$.  Applying  Lemma \ref{l:Mattila--Marstrand 02} with $n=\dim \bH(d)=d^2$ and $m=1$ we see that for almost all lines in $\bH(d)$ of the form $\balpha_0 + t \balpha$, $\balpha_0\in \wt E$, $\balpha\in \bH(d)$, their intersection with the extended exceptional set $\wt E$ should have positive  Hausdorff dimension (at least $s+1-d^2>0$). But as we just discussed above, for all such lines $L$ with the directions $\balpha$ in the open cone of positive definite matrices $\balpha$ (i.e.~for a set of non-zero measure) we have at most countable intersection, so the dimension of the intersection is $0$. This gives a contradiction, and therefore $\dim \wt E\le \dim \bH(d)-1 = d^2 -1$. 
\end{proof}

\section{Preliminaries: spectral measures and the exceptional set}\label{s:prelims}

For the operator $A$ with cyclic set $\Ran\bB$ define its matrix-valued spectral measure $\bM$ with values in $\bH(d)$ as the unique measure satisfying
\begin{align*}
\bB^* (A - z \bI)^{-1} \bB = \int_\R (s - z)^{-1} \dd\bM(s) = :\cC \bM(z);
\end{align*}
the spectral measures $\bM_\balpha$, $\balpha\in\bH(d)$ are defined the same way with $A$ replaced by $A_\balpha$. 

It follows from the standard resolvent identities that the Cauchy Transforms $\cC \bM$ and $\cC \bM_\balpha$ are related by the following well known formula
\begin{equation}
\label{M - M_alpha}
\cC \bM_\balpha = \cC\bM (\bI + \balpha\, (\cC\bM))^{-1} = (\bI +  (\cC\bM) \balpha)^{-1}\cC\bM.
\end{equation}
For a proof of these relations, see e.g.~\cite[Lemma 3.1]{LTJST}.

Define the scalar measure 
$\mu^\balpha:= \tr \bM_\balpha$. Clearly $\bM_\balpha$ is absolutely continuous with respect to $\mu^\balpha$, 
\begin{align*}
\dd \bM_\balpha = W_\balpha \dd\mu^\balpha, \qquad \| W(s) \|\le 1 \quad \mu^\balpha\text{-a.e.}
\end{align*}
It is not hard to see that $\mu^\balpha$ is a scalar spectral measure of the operator $A_\balpha$ (recall that by a scalar spectral measure we always mean a scalar spectral measure of maximal type). 
Recall, that a scalar spectral measure is not unique, it is defined up to a multiplication by a non-vanishing weight. So the \emph{spectral type} $[\mu^\balpha]$ of $\mu^\balpha$, i.e.~the equivalence class of all mutually absolutely continuous with $\mu^\balpha$ measures, gives us all possible scalar spectral measures of $A_\balpha$. For our purposes it does not matter which representative we choose, and $\mu^\balpha$ is a convenient choice.

For a fixed finite singular measure $\nu$ on $\R$ define the exceptional set $E=E(\nu)$ to be the set of all $\balpha\in\bH(d)$ for which the measures $\mu^\balpha$  and $\nu$ are not mutually singular. Note that in the definition of $E$ we can replace $\mu^\balpha$ by its singular part, and the resulting set will be exactly the same. 

 If $\nu$ is the singular part of the spectral measure $\mu^{\balpha_0}$, then the exceptional set $E$ is exactly the set of all $\balpha\in\bH(d)$ such that the singular parts of $\mu^{\balpha_0}$ and $\mu^\balpha$ are \emph{not mutually singular} for some $\balpha$.

 Note, that the set $E$ is explicitly defined, not just up to a set of measure zero. In other words, for each $\balpha\in\bH(d)$ one can always say  if $\balpha\in E$ or not.

However we do not know  whether the set $E$ is Borel, or even a Suslin (analytic) set. In particular, we cannot directly approach the set $E$ using measure theoretic tools. We bypass this problem by constructing a bigger set $\wt E$, which is Borel, see the above Theorem \ref{t:Borel}.

Recall that for a (say finite) Borel measure $\mu$ on $\R$ its Poisson extension to the upper half-plane $\C_+$ (which we, slightly abusing notation, denote as $\mu(z)$) is given as 
\begin{align*}
\mu(z) = \pi^{-1}\im \cC \mu(z), \qquad z \in \C_+, 
\end{align*}
where $\cC\mu$ is the Cauchy transform of the measure $\mu$, 
\[
\cC \mu(z)= \int_\R (s-z)^{-1} \dd\mu(s), \qquad z\in \C_+. 
\]
Similarly, for a (say again finite) matrix measure $\bM$ its Poisson  extension $\bM(z)$ to the point $z\in\C_+$ is given by 
\[
\bM(z) = \pi^{-1} \im \cC\bM(z), \qquad \im \cC \bM(z) = (2i)^{-1} (\cC\bM(z) - \cC\bM(z)^* ). 
\]

This formula, together with \eqref{M - M_alpha} implies that the functions $(\balpha, z)\mapsto \bM_\balpha(z)$, $(\balpha, z)\mapsto \mu^\balpha(z) =\tr \bM_\balpha(z)$ is a continuous function of the arguments $\balpha\in\bH(d)$, $z\in\C_+$. 

\section{Some measure theory}
\label{s:measure}

It is well known (see e.g.~\cite[Part (ii) of Theorem 3.4]{LTJST}) that for a finite non-negative measure $\mu$ on $\R$ its singular part $\mu\ti s$ is \emph{carried} by the set $S_\mu$ of all $x\in\R$ for which 
\[
\lim_{z\to x\sphericalangle} \mu(z) = + \infty;
\]
here by limit we understand the non-tangential limit, and  $\mu(z)$ is the Poisson extension of the measure  $\mu$ at the point $z\in\C_+$. The term \emph{carried} here means that $\mu\ti s(\R\setminus S_\mu) =0$. 

Note, that if we pick a \emph{reasonable} sequence  $y_n \downarrow 0$, for example $y_n = 2^{-n}$ (or $y_n= 1/n$), then it follows easily from Harnack's inequality that for any aperture of the approach region 
\[
\lim_{z\to x\sphericalangle} \mu(z) = + \infty \quad \text{if and only if } \quad \lim_{n\to \infty} \mu(x+i y_n) = + \infty;
\]
this equivalence holds for \emph{all} $x\in \R$. In particular, this means that the set $S_\mu$ is always a Borel set. 

Let us fix such a ``reasonable'' sequence $y_n$. 
Define the set $F^1\subset \bH(d)\times \R$, consisting of all pairs $(\balpha, x)$ such that 
\[
\lim_{n\to \infty} \mu^\balpha(x + i y_n)=+\infty. 
\]		

As we discussed  at the end of Section \ref{s:prelims},  $(\balpha, z)\mapsto\mu^\balpha(z)$ is a continuous (and so Borel measurable) function of the variables $\balpha\in\bH(d)$, $z\in\C_+$. Therefore,  the functions $(\balpha, x)\mapsto \mu^\balpha(x+iy_n)$ are  Borel measurable functions of the variables $(\balpha, x)\in \bH(d)\times \R$,  so the set $F^1$ is a Borel subset of $\bH(d)\times \R$.

It is well known that if $\dd\nu = w\dd\mu$, then 
\[
\lim_{z\to x\sphericalangle} \nu(z)/\mu(z) = w(x) \qquad \mu\text{-a.e.}
\]
This implies that for the ``reasonable'' sequence of  $y_n$ we picked above, we have
\[
\lim_{n\to\infty} \nu(x+i y_n)/\mu(x+i y_n) = w(x) \qquad \text{for }\mu\text{-a.e. }x\in\R.
\]

As we discussed above, $\dd\bM_\balpha = W_\balpha\dd \mu^\balpha$, 
$\mu^\balpha$-a.e.,     so for any fixed $\balpha\in\bH(d)$   
\begin{align}
\label{density=limit}
\lim_{n\to\infty} \bM_\balpha(x+i y_n)/\mu^\balpha(x+ i y_n) = W_\balpha(x) \qquad \mu^\balpha\text{-a.e.}
\end{align}
Define the set $F^2\subset \bH(d)\times \R$ to be the set of all pairs $(\balpha, x)$ such that the limit  $\lim_{n\to \infty} \bM_\balpha(x +i y_n)/\mu^\balpha(x +i y_n) $ exists and is non-zero. Again, this set is clearly Borel. 

Moreover, if for a set $F\subset \bH(d)\times\T$ we denote by $F_\balpha$ its section, 
\[
F_\balpha:=\{x\in\R: (\balpha, x )\in F\},
\] 
then for any $\balpha \in\bH(d)$  we have 
\begin{align}
\label{IgnoreLim0}
\dd\bM_\balpha = \1_{F^2_\balpha} W_\balpha \dd\mu^\balpha
\end{align}
(i.e.~defining the density $W_\balpha$ we can ignore the set where the limit \eqref{density=limit} does not exists or equals $\bO$). 

Noticing that for any Borel $E\subset\R$ the measure $\mu^\balpha(E)>0$ if and only if $\bM_\balpha(E)\ne \bO$, we get the following statement. 

Let $F=F^1\cap F^2$, so according to our notation $F_\balpha=F^1_\balpha \cap F^2_\balpha$.

\begin{lem}
\label{l:SingCarrier}
For any $\balpha\in\bH(d)$ the singular part of $\mu^\balpha$ is carried by $F_\balpha$, meaning that $\mu^\balpha\ti s (\R\setminus F_\balpha)=0$. 
This implies, in particular, that given a singular measure $\nu$ the measures $\nu$ and $\mu^\balpha$ (equivalently $\nu$ and the singular part of $\mu^\balpha$) are not mutually singular only if $\nu(F_\balpha)>0$. 
\end{lem}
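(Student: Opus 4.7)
The plan is to combine the two facts established just above the statement: the singular part $\mu^\balpha\ti s$ is carried by the blow-up set of the Poisson extension of $\mu^\balpha$, and the matrix-valued density $W_\balpha$ is recovered $\mu^\balpha$-a.e.\ by the ratio limit in \eqref{density=limit}. Together with the Harnack's inequality remark reducing nontangential limits to the chosen sequence $y_n$, the first fact immediately gives $\mu^\balpha\ti s(\R\setminus F^1_\balpha)=0$, so the real content is to also show $\mu^\balpha\ti s(\R\setminus F^2_\balpha)=0$, after which the intersection $F_\balpha = F^1_\balpha\cap F^2_\balpha$ settles the first assertion.

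For $F^2_\balpha$, my plan is to argue that even the full measure $\mu^\balpha$, not just its singular part, gives zero mass to $\R\setminus F^2_\balpha$. The general fact underlying \eqref{density=limit} already kills the subset where the ratio limit fails to exist. To handle the locus where the limit exists but equals $\bO$, I would use that $\mu^\balpha = \tr\bM_\balpha$ together with $\dd\bM_\balpha = W_\balpha\dd\mu^\balpha$ forces $\tr W_\balpha = 1$ $\mu^\balpha$-a.e., so in particular $W_\balpha\ne\bO$ at $\mu^\balpha$-a.e.\ point. Consequently $\mu^\balpha(\R\setminus F^2_\balpha)=0$, which is more than what is needed.

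For the second assertion I would argue by contraposition. Assuming $\nu(F_\balpha)=0$, the partition $\R = F_\balpha\sqcup(\R\setminus F_\balpha)$ together with the carrier property just proved exhibits mutual singularity of $\nu$ and $\mu^\balpha\ti s$. Since $\nu$ is singular, the absolutely continuous part $\mu^\balpha\ti{ac}$ is automatically singular with respect to $\nu$, so $\nu\perp\mu^\balpha\ti s$ is equivalent to $\nu\perp\mu^\balpha$. Contrapositively, if $\nu$ and $\mu^\balpha$ are not mutually singular, then $\nu(F_\balpha)>0$.

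I do not anticipate a serious obstacle; the only mildly delicate point is ruling out that the ratio limit vanishes on a set of positive $\mu^\balpha$-measure, and this is clinched by the trace-one normalization of $W_\balpha$ that comes directly from the definition $\mu^\balpha = \tr\bM_\balpha$.
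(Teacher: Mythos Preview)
Your proposal is correct and follows essentially the same route as the paper. The paper packages your trace-one observation into the preparatory formula \eqref{IgnoreLim0} (which is exactly the statement that $\mu^\balpha(\R\setminus F^2_\balpha)=0$), and then in the proof just invokes it together with $\mu^\balpha=\tr\bM_\balpha$ to conclude $\mu^\balpha(F^1_\balpha\setminus F^2_\balpha)=0$; your write-up simply unpacks that formula inside the proof, and even records the slightly stronger fact that the full measure $\mu^\balpha$, not just its singular part, vanishes off $F^2_\balpha$.
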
 

\begin{proof}
As we discussed above, the singular part of $\mu^\balpha$ is supported on the set $F^1_\balpha$. Formula \eqref{IgnoreLim0} implies that $\bM_\balpha (F^1_\balpha\setminus F^2_\balpha)=\bO$. Since $\mu^\balpha=\tr \bM_\balpha$, we conclude that $\mu^\balpha( F^1_\balpha\setminus F^2_\balpha ) =0$. So the singular part of $\mu^\balpha $ is indeed supported on $F_\balpha$. 

The second statement follows trivially. 
\end{proof}

\begin{lem}
\label{l:tilde E}
Let $\wt E = \wt E(\nu)$ be the set of all $\balpha\in \bH(d)$ such that $\nu(F_\balpha)>0$. Then $\wt E$ contains the exceptional set $E$ and $\wt E$ is Borel.
\end{lem}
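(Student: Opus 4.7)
The plan is to handle the two claims separately. For the inclusion $E\subset \wt E$, I would simply quote Lemma \ref{l:SingCarrier}: if $\balpha\in E$, then by definition the singular measure $\nu$ and $\mu^\balpha$ (equivalently $\nu$ and $\mu^\balpha\ti s$) are not mutually singular, and the last sentence of Lemma \ref{l:SingCarrier} forces $\nu(F_\balpha)>0$, which is exactly membership in $\wt E$. This half is essentially a restatement of what has already been proved.

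The substantive part is showing that $\wt E$ is Borel, which I would reduce to the Borel measurability of the function
\[
\Phi:\bH(d)\to [0,\infty),\qquad \Phi(\balpha):=\nu(F_\balpha),
\]
since then $\wt E = \Phi^{-1}((0,\infty))$ is Borel. The key input is that $F=F^1\cap F^2$ was already shown (in Section \ref{s:measure}) to be a Borel subset of the product space $\bH(d)\times\R$. I would invoke the standard Tonelli/Fubini measurability statement: for a finite Borel measure $\nu$ on $\R$ and a Borel set $F\subset \bH(d)\times\R$, the section function $\balpha\mapsto \nu(F_\balpha)$ is Borel measurable. This works because $\bH(d)$ and $\R$ are both Polish (in particular second countable), so the Borel $\sigma$-algebra on the product coincides with the product of the Borel $\sigma$-algebras, and $\nu$ is trivially $\sigma$-finite.

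Concretely, I would establish the measurability of $\Phi$ by the usual monotone-class / Dynkin system argument: let $\mathcal{F}$ denote the class of sets $F\subset \bH(d)\times\R$ such that $\balpha\mapsto \nu(F_\balpha)$ is Borel. One checks that $\mathcal{F}$ contains all measurable rectangles $U\times V$ (for which $\nu(F_\balpha)=\1_U(\balpha)\nu(V)$ is manifestly Borel), is closed under finite disjoint unions, and under monotone limits by the monotone convergence theorem applied to $\nu$ (which uses finiteness of $\nu$ for decreasing sequences). Hence $\mathcal{F}$ contains the $\sigma$-algebra generated by rectangles, i.e.~all Borel subsets of $\bH(d)\times\R$. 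Applying this to $F^1\cap F^2$ gives Borel measurability of $\Phi$ and hence that $\wt E$ is Borel.

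I do not anticipate a real obstacle here: all of the hard work (Borel measurability of $F^1$ and $F^2$, the carrier statement for $\mu^\balpha\ti s$) has been done in Sections \ref{s:prelims} and \ref{s:measure}. The only thing to be mildly careful about is the standard measure-theoretic lemma that sections of a product-Borel set are measured in a Borel fashion; the finiteness of $\nu$ makes this painless.
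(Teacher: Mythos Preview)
Your proposal is correct and follows exactly the paper's approach: the containment $E\subset\wt E$ is derived from Lemma~\ref{l:SingCarrier}, and the Borel measurability of $\wt E$ is obtained from the Tonelli-type section-measurability statement applied to the Borel set $F\subset\bH(d)\times\R$. You simply spell out the standard monotone-class argument that the paper compresses into the phrase ``an immediate corollary of the Tonelli theorem.''
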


\begin{proof}
The containment $E\subset \wt E$ follows from Lemma \ref{l:SingCarrier}. The Borel measurability of $\wt E$ is an immediate corollary of the Tonelli theorem.
\end{proof}

\section{Directional support of the singular part}
\label{s:DirectionalSupport}

\begin{lem}
\label{l:alpha orthog}
Suppose that for  $\balpha\in \bH(d)$ and for $x\in\R$ 
\begin{align}
\label{lim mu = infty}
\lim_{n\to \infty} \mu(x+ i y_n )  =+\infty, \qquad
\lim_{n\to \infty} \mu^\balpha(x+ i y_n)  =+\infty, 
\end{align}
and that the limits
\begin{align}
\label{lim M/mu}
&\lim_{n\to \infty} \bM (x + i y_n)/\mu(x + i y_n)  =: W(x), \\
\notag 
& \lim_{n\to \infty} \bM_\balpha(x + i y_n)/\mu^\balpha(x + i y_n) =: W_\balpha(x)
\end{align}
exist and are non-zero.  

Then 
\begin{align*}
\Ran W(x) \perp \balpha \Ran W_\balpha (x)\qquad \text{or, equivalently,} \qquad 
\balpha \Ran W(x) \perp \Ran W_\balpha (x) .
\end{align*}
\end{lem}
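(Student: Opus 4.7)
My plan is to combine the resolvent identity \eqref{M - M_alpha} with a scaling analysis of the Cauchy transforms $\cC\bM$ and $\cC\bM_\balpha$. Multiplying $\cC\bM_\balpha = \cC\bM(\bI+\balpha\cC\bM)^{-1}$ on the right by $\bI+\balpha\cC\bM$ gives the algebraic identity
\[
\cC\bM(z) - \cC\bM_\balpha(z) \;=\; \cC\bM_\balpha(z)\,\balpha\,\cC\bM(z), \qquad z\in\C_+.
\]
Evaluating at $z=z_n:=x+iy_n$ and dividing by $\mu(z_n)\,\mu^\balpha(z_n)$ yields
\[
\frac{1}{\mu^\balpha(z_n)}\cdot\frac{\cC\bM(z_n)}{\mu(z_n)} \;-\; \frac{1}{\mu(z_n)}\cdot\frac{\cC\bM_\balpha(z_n)}{\mu^\balpha(z_n)} \;=\; \frac{\cC\bM_\balpha(z_n)}{\mu^\balpha(z_n)}\,\balpha\,\frac{\cC\bM(z_n)}{\mu(z_n)}.
\]
The goal is then to show the right-hand side tends to $-\pi^2\,W_\balpha(x)\,\balpha\,W(x)$ and the left-hand side tends to $\bO$, so that $W_\balpha(x)\,\balpha\,W(x)=\bO$.

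The crucial input is the convergence
\[
\frac{\cC\bM(z_n)}{\mu(z_n)} \longrightarrow i\pi\, W(x), \qquad \frac{\cC\bM_\balpha(z_n)}{\mu^\balpha(z_n)} \longrightarrow i\pi\, W_\balpha(x).
\]
The imaginary parts come for free from the hypotheses, since $\im\cC\bM(z_n)/\mu(z_n) = \pi\bM(z_n)/\mu(z_n) \to \pi W(x)$ by \eqref{lim M/mu}, and analogously for $\bM_\balpha$. The non-trivial piece is the vanishing of the normalized real parts, $\re\cC\bM(z_n)/\mu(z_n)\to 0$ and its analogue for $\bM_\balpha$. This is a Poltoratskii-type boundary-behavior statement: at a point where the Poisson extension of the singular part of a positive (matrix) measure tends to infinity, the Cauchy transform blows up in the purely imaginary direction relative to its Poisson integral. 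Classical for scalar measures, it transfers diagonally to the matrix-valued setting; one would either cite it or establish the required asymptotics as a short lemma, noting (as remarked in Section \ref{s:measure}) that Harnack's inequality makes the vertical approach along $y_n$ equivalent to the non-tangential one.

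Granting this, the right-hand side of the divided identity tends to $(i\pi W_\balpha(x))\,\balpha\,(i\pi W(x)) = -\pi^2\,W_\balpha(x)\,\balpha\,W(x)$, while the left-hand side tends to $\bO$ because $\mu(z_n),\mu^\balpha(z_n)\to\infty$ and the normalized Cauchy transforms are bounded (they converge). Hence $W_\balpha(x)\,\balpha\,W(x)=\bO$. Since $W(x)$ and $W_\balpha(x)$ are Hermitian positive semidefinite, $\Ker W_\balpha(x) = (\Ran W_\balpha(x))^\perp$, so the identity is equivalent to $\balpha\Ran W(x)\subseteq(\Ran W_\balpha(x))^\perp$, which is exactly $\Ran W_\balpha(x)\perp\balpha\Ran W(x)$; the equivalent form follows by taking the Hermitian adjoint. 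The sole obstacle is the Poltoratskii-type control of the real parts; modulo that input, the rest is the single rescaling of the resolvent identity displayed above.
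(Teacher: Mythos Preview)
Your rescaling of the resolvent identity is natural, but the ``Poltoratskii-type'' control of the real parts is not a technicality one can quote or prove as a short lemma: it is false under the hypotheses you isolate for it. Take $d=1$ and $\dd\mu(s)=s^{-1/2}\1_{[0,1]}(s)\,\dd s$, at $x=0$. A direct computation (substitute $s=yu$) gives
\[
\re\cC\mu(iy)\ \sim\ \im\cC\mu(iy)\ \sim\ \frac{\pi}{\sqrt 2}\,y^{-1/2}\qquad(y\downarrow0),
\]
so $\mu(iy_n)\to\infty$ and (trivially, since $d=1$) $\bM(iy_n)/\mu(iy_n)\to1$, yet $\re\cC\mu(iy_n)/\mu(iy_n)\to\pi\ne0$; thus $\cC\bM(z_n)/\mu(z_n)\to\pi(1+i)$ rather than $i\pi$. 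The boundary behavior you invoke holds at best $\mu\ti s$-a.e., whereas the Lemma must apply at \emph{every} $x$ satisfying \eqref{lim mu = infty}--\eqref{lim M/mu}; this is precisely the distinction stressed in the Remark following the statement. Note also that without the claimed convergence you cannot even assert that your left-hand side tends to $\bO$, since its boundedness was deduced from that very convergence; and even if both normalized Cauchy transforms happened to converge to some matrices $A$, $B$ with $\im A=\pi W(x)$, $\im B=\pi W_\balpha(x)$, the relation $B\balpha A=\bO$ does not by itself yield $W_\balpha(x)\,\balpha\,W(x)=\bO$ when $\re A$, $\re B$ are nonzero.

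The paper sidesteps the real part of $\cC\bM$ entirely. It invokes the uniform two-weight matrix $A_2$ bound
\[
\bigl\|\bM(z)^{1/2}\bigl(\balpha\bM_\balpha(z)\balpha\bigr)^{1/2}\bigr\|\le C,\qquad z\in\C_+,
\]
established in \cite{LTJST}, which involves only the Poisson extensions $\bM(z)$, $\bM_\balpha(z)$. Dividing by $(\mu(z_n)\mu^\balpha(z_n))^{1/2}\to\infty$ and passing to the limit along $z_n=x+iy_n$ gives $\bigl\|W(x)^{1/2}\bigl(\balpha W_\balpha(x)\balpha\bigr)^{1/2}\bigr\|=0$ directly, which is the desired orthogonality.
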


\begin{rem*}
We do not need to assume that the limits are non-zero: if one of the limits is zero, the statement is trivial. 
\end{rem*}

\begin{rem}
The above Lemma \ref{l:alpha orthog} looks very much like Theorem 6.2 from \cite{LTJST}, and the proof below is essentially the proof from \cite{LTJST}. But the important difference is that in \cite{LTJST} the orthogonality condition was satisfied $\mu\ti s + \mu^\balpha\ti s$ a.e., but in Lemma \ref{l:alpha orthog} we need it to hold for \emph{all} $x$ satisfying \eqref{lim mu = infty} and \eqref{lim M/mu}. Since the devil is often in details, we present a complete proof below. 
\end{rem}

\begin{proof}[Proof of Lemma \ref{l:alpha orthog}]
By Theorem 6.7 from \cite{LTJST} the matrix measures $\bM$ and $\wt\bM_\balpha:=\balpha \bM_\balpha \balpha$ satisfy the joint \emph{two weight matrix $A_2$ condition,} i.e. 
\[
\left\| \bM(z)^{1/2}  \wt  \bM_\balpha(z)^{1/2} \right\| \le C<\infty \qquad \forall z \in \C_+.
\]
We can rewrite this inequality as 
\begin{align}
\label{mA_2-01}
(\mu(z) \mu^\balpha(z))^{1/2}\left\| \left( \vphantom{\wt \bM _\balpha }\bM (z)/\mu(z)\right)^{1/2}  \left( \wt  \bM _\balpha(z) /\mu^\balpha(z)\right)^{1/2} \right\| \le C<\infty \qquad \forall z \in \C_+.
\end{align}
Let us substitute $z=z_n = x+iy_n$ from the statement of the lemma into \eqref{mA_2-01} and take the limit as $n\to\infty$. Taking \eqref{lim mu = infty} into account we can conclude from \eqref{mA_2-01} that 
\begin{align}
\label{lim norm = 0}
\lim_{n\to\infty} \left\| \Big( \bM (z_n)/\mu(z_n)\Big)^{1/2}  \Big( \wt  \bM _\balpha(z_n) /\mu^\balpha(z_n)\Big)^{1/2} \right\| = 0 .
\end{align}
It follows from the identities \eqref{lim M/mu} that 
\begin{align*}
\lim_{n\to\infty} \bM(z_n)/\mu(z_n) = W(x), \qquad
\lim_{n\to\infty} \wt\bM_\balpha(z_n)/\mu^\balpha(z_n) = \balpha W_\balpha(x) \balpha , 
\end{align*}
so the limit in \eqref{lim norm = 0} is exactly
\begin{align*}
\left\| \big( W(x) \big)^{1/2} \big(\balpha W_\balpha (x) \balpha \big)^{1/2}     \right\| = 0. 
\end{align*}
But the last identity could happen only if the ranges of (self-adjoint) matrices $W(x)$ and $\balpha W_\balpha(x) \balpha$ are orthogonal. 
\end{proof}

\section{Countable intersections with extended exceptional set}
\label{s:count intersections}

In the theorem below $F_\balpha$ is  defined as in Section \ref{s:measure}.

\begin{theo}
	\label{t:countable_intersections}
Let $\balpha, \balpha_0\in\bH(d)$ with $\balpha >\bO$, and let $\balpha(t) = \balpha_0 + t \balpha$
Then, given a finite singular Borel measure $\nu$ on $\R$, for all $t$ except maybe countably many
	\[
	\nu( F_{\balpha(t)}) =0 . 
	\] 
\end{theo}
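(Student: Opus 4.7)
The plan is to first establish a pointwise combinatorial bound: for every $x \in \R$, the set $\{t \in \R : x \in F_{\balpha(t)}\}$ has cardinality at most $d$. This will then combine with a quantitative Fubini argument against the measure $\nu$ to force $\nu(F_{\balpha(t)})>0$ for only countably many $t$.

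To obtain the pointwise bound, I would exploit the affine structure by rewriting, for any $t_1\ne t_2$,
\[
A_{\balpha(t_2)} \;=\; A_{\balpha(t_1)} + (t_2-t_1)\,\bB\balpha\bB^*,
\]
so that the pair $(A_{\balpha(t_1)},\,A_{\balpha(t_2)})$ fits the framework of Lemma \ref{l:alpha orthog} if I take $A_{\balpha(t_1)}$ as the base operator (with cyclicity of $\Ran\bB$ for $A_{\balpha(t_1)}$ automatic from cyclicity for $A$) and $(t_2-t_1)\balpha$ as the perturbation matrix. For $x\in F_{\balpha(t_1)}\cap F_{\balpha(t_2)}$ the hypotheses on Poisson limits and density limits in Lemma \ref{l:alpha orthog} are all met, so its conclusion becomes
\[
\Ran W_{\balpha(t_1)}(x) \;\perp\; (t_2-t_1)\balpha\,\Ran W_{\balpha(t_2)}(x),
\]
which, dividing out the nonzero scalar $t_2-t_1$ and using that $\balpha>\bO$ so that $\balpha^{1/2}$ is self-adjoint and invertible, is equivalent to
\[
\balpha^{1/2}\Ran W_{\balpha(t_1)}(x) \;\perp\; \balpha^{1/2}\Ran W_{\balpha(t_2)}(x)
\]
in the standard inner product on $\C^d$. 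Any distinct $t_1,\dots,t_k$ all satisfying $x\in F_{\balpha(t_i)}$ would then yield $k$ pairwise orthogonal nonzero subspaces $\balpha^{1/2}\Ran W_{\balpha(t_i)}(x)$ of $\C^d$, forcing $k\le d$.

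The passage from this pointwise bound to the theorem is then essentially counting. Set $T_n := \{t\in\R : \nu(F_{\balpha(t)}) > 1/n\}$. For any finite subset $\{t_1,\dots,t_k\}\subset T_n$, integrating the pointwise estimate $\sum_{i=1}^k \1_{F_{\balpha(t_i)}}(x) \le d$ against $\nu$ gives
\[
\tfrac{k}{n} \;<\; \sum_{i=1}^k \nu(F_{\balpha(t_i)}) \;\le\; d\,\nu(\R),
\]
so $\#T_n \le d\,n\,\nu(\R) < \infty$, and $\{t : \nu(F_{\balpha(t)})>0\} = \bigcup_n T_n$ is countable, which is the claim of the theorem.

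The most delicate point I anticipate is confirming that Lemma \ref{l:alpha orthog} really does apply with the ``base'' role played by $A_{\balpha(t_1)}$ rather than by $A$ itself. Its proof relies on the joint two-weight matrix $A_2$ condition from Theorem~6.7 of \cite{LTJST}, and one should check that this condition is symmetric in the two spectral measures of a perturbation pair (as the formula $\wt\bM_\balpha=\balpha\bM_\balpha\balpha$ suggests), so that the whole argument runs equally well starting from any perturbed operator in the family. Once this interchangeability is in place, the remainder is just the orthogonal-subspaces pigeonhole followed by the Fubini-style count above.
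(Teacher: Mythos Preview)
Your argument is correct and takes a genuinely different, more elementary route than the paper. Both proofs begin identically: apply Lemma \ref{l:alpha orthog} with $A_{\balpha(t_1)}$ as base operator and $(t_2-t_1)\balpha$ as perturbation to get, for $x\in F_{\balpha(t_1)}\cap F_{\balpha(t_2)}$, the orthogonality $\Ran W_{\balpha(t_1)}(x)\perp\balpha\Ran W_{\balpha(t_2)}(x)$. (The paper makes this same substitution without further comment, so your caveat about the two-weight $A_2$ condition is not a gap peculiar to your approach.) From here the paths diverge. You rewrite the orthogonality via $\balpha^{1/2}$ to get pairwise orthogonal \emph{nonzero} subspaces of $\C^d$, deduce the sharp pointwise bound $\#\{t:x\in F_{\balpha(t)}\}\le d$, and finish with a direct Fubini/counting argument giving $\#\{t:\nu(F_{\balpha(t)})>1/n\}\le dn\,\nu(\R)$. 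The paper instead invokes a separate Lemma \ref{l:A-ort-distance} on $\balpha$-orthogonal vectors, builds a Borel measurable selection $\Phi(\balpha,x)\in\Ran W_\balpha(x)$, normalizes in $L^2(\nu;\C^d)$, and shows the resulting unit vectors are $c$-separated, concluding by separability of $L^2(\nu;\C^d)$. Your route avoids the measurable-selection construction and the auxiliary lemma, and yields quantitative information (the bound on $\#T_n$ and the pointwise cardinality bound $d$) that the separability argument does not; the paper's route, on the other hand, does not use invertibility of $\balpha$ until the very end and packages the geometry into a single Hilbert-space separation statement.
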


We need the following  lemma, the trivial proof of which we chose to omit.

\begin{lem}
	\label{l:A-ort-distance}
Let $A$ be a positive definite matrix. 	There exists a constant $c=c(A)$ depending on  $A$ such that the condition $(A\bx, \by) =0$ implies
\[
\| \bx-\by\|^2 \ge c(A) ( \| \bx\|^2 + \| \by\|^2 ). 
\]

\end{lem}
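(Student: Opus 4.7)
The plan is to make the linear change of variables $\tilde{\bx} := A^{1/2}\bx$, $\tilde{\by} := A^{1/2}\by$, which converts the bilinear condition $(A\bx, \by) = 0$ into genuine orthogonality in the standard inner product. Since $A^{1/2}$ is self-adjoint, one has $(A\bx, \by) = (A^{1/2}A^{1/2}\bx, \by) = (A^{1/2}\bx, A^{1/2}\by) = (\tilde\bx, \tilde\by)$, so the hypothesis becomes $\tilde\bx \perp \tilde\by$, and Pythagoras gives
\[
\|\tilde\bx - \tilde\by\|^2 \;=\; \|\tilde\bx\|^2 + \|\tilde\by\|^2
\]
with no loss.

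All that remains is to pull this identity back to the original vectors using two standard operator-norm bounds. On the left, writing $\bx - \by = A^{-1/2}(\tilde\bx - \tilde\by)$ and using $\|A^{-1/2}\bv\| \ge \lambda_{\max}(A)^{-1/2}\|\bv\|$ yields
\[
\|\bx - \by\|^2 \;\ge\; \lambda_{\max}(A)^{-1}\bigl(\|\tilde\bx\|^2 + \|\tilde\by\|^2\bigr).
\]
On the right, $\|\tilde\bx\|^2 = \|A^{1/2}\bx\|^2 \ge \lambda_{\min}(A)\|\bx\|^2$, and the analogous bound for $\by$, give $\|\tilde\bx\|^2 + \|\tilde\by\|^2 \ge \lambda_{\min}(A)(\|\bx\|^2 + \|\by\|^2)$. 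Chaining these two inequalities produces the lemma with the explicit constant
\[
c(A) \;=\; \lambda_{\min}(A)/\lambda_{\max}(A),
\]
the reciprocal of the condition number of $A$.

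There is no real obstacle — the whole argument is the substitution $\bx \mapsto A^{1/2}\bx$ followed by two one-line norm estimates, consistent with the authors' remark that the proof is trivial. The one observation worth recording is that $c(A)$ depends on $A$ only through its condition number, so it stays bounded away from zero uniformly on any compact subset of the open positive-definite cone; this is precisely the form of quantitative control that the application of the lemma in Section~\ref{s:count intersections} will need.
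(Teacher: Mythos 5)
Your proof is correct: the substitution $\tilde\bx=A^{1/2}\bx$, $\tilde\by=A^{1/2}\by$ turns the hypothesis into genuine orthogonality, and the two eigenvalue bounds give the claim with $c(A)=\lambda_{\min}(A)/\lambda_{\max}(A)$. The paper omits its own proof, declaring it trivial, and your argument is exactly the standard one it has in mind; the only cosmetic point is that the closing remark about uniformity over compact subsets of the positive cone is not actually needed in the application, where a single fixed direction $\balpha>\bO$ is used.
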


\begin{proof}[Proof of Theorem \ref{t:countable_intersections} using Lemma \ref{l:A-ort-distance}.]
Pick
$t,t'\in\R$ such that $\nu(F_{\balpha(t)})\ne0$ and $\nu(F_{\balpha(t')})\ne0$. 
Recall that for all $(\balpha, x) \in F$ (where $F$ is as defined in Section \ref{s:measure}) the limit 
\[
\lim_{n\to \infty} \bM_\balpha (x + i y_n)/\mu^\balpha(x + i y_n) =: W_\balpha(x) 
\]
exists and is non-zero. 
	
	We had shown in Section \ref{s:measure} that the set $F$ is Borel measurable and that the function  $(\balpha, x)  \mapsto W_\balpha(x)$ is a measurable function on $F$ (as a limit of a sequence of continuous functions). Extending it by $\bO$ outside of $F$ we will get a measurable function defined on the whole $\bH(d)$. 
	
	It is then an easy exercise to show that  we can find a vector-valued Borel measurable function $(\balpha,x)\mapsto \Phi(\balpha, x)\in \Ran W_\balpha(x) \subset \C^d$ such that $\Phi(\balpha, x)\ne\bO$ if and only if $(\balpha, x) \in F$. Multiplying this function by an appropriate measurable function depending on $\balpha$ only we can assume that without loss of generality 
	\[
	\int_\R \|\Phi(\balpha, x)\|\ci{\C^d}^2 \dd\nu(x) = 1 \qquad 
	\]
	whenever $\nu(F_\balpha)\ne0$; if $\nu(F_\balpha) = 0$ the integral is trivially $0$. 
	
	Define $f(x):=\Phi(\balpha(t),x)$, $g(x): = \Phi(\balpha(t'),x)$. Note  that 
	\[
	\|f\|\ci{L^2(\nu)} = \|g\|\ci{L^2(\nu)} =1.
	\]
	
	Since $\balpha(t') = \balpha(t) + (t'-t)\balpha$,
Lemma \ref{l:alpha orthog} (applied to $W_{\balpha(t)}$ instead of $W$ and $W_{\balpha(t')}$ instead of $W_\balpha$) implies that $\balpha \Ran W_{\balpha(t)} \perp \Ran W_{\balpha(t')} $, so
\[
(\balpha f(x) , g(x)  )\ci{\C^d} = 0 \qquad \forall x\in \R;
\]  
if both points $(\balpha(t), x)$ and $(\balpha(t'), x)$ are in $F$, this follows from Lemma \ref{l:alpha orthog}; if not, this is trivial, because one of the vectors $f(x)$, $g(x)$ is zero. 

Applying Lemma \ref{l:A-ort-distance} 
we get that for all $x\in\R$
\[
\|f(x) - g(x)\|\ci{\C^d}^2 \ge c \cdot \left( \|f(x) \|\ci{\C^d}^2  + \|g(x)\|\ci{\C^d}^2  \right) .
\]
Integrating this inequality we see that 
\[
\|f-g\|\ci{L^2(\nu)}^2 \ge c \cdot \left( \|f(x) \|\ci{L^2(\nu)}^2  + \|g(x)\|\ci{L^2(\nu)}^2  \right)  = c .
\]

So, for all $t\in \R$ such that $\nu(F_{\balpha(t)})\ne0$ we constructed  unit vectors $f_t = \Phi(\balpha(t), \fdot) \in L^2(\nu)= L^2(\nu;\C^d) $ such that the distance between any two such vectors is at least some fixed  $c>0$. Since the space $L^2(\nu;\C^d) $ is separable, there can be at most countably many such $t\in\R$.  
\end{proof}

\providecommand{\bysame}{\leavevmode\hbox to3em{\hrulefill}\thinspace}
\providecommand{\MR}{\relax\ifhmode\unskip\space\fi MR }
\providecommand{\MRhref}[2]{%
  \href{http://www.ams.org/mathscinet-getitem?mr=#1}{#2}
}
\providecommand{\href}[2]{#2}


\begin{thebibliography}{10}

\bibitem{kurasovbook}
S.~Albeverio, P.~Kurasov, \emph{Singular Perturbations of Differential Operators}, London Math.~Soc.~Lecture Note Series {\bf 271}, Cambridge University Press, Cambridge, UK, 2000. 

\bibitem{Bogachev2007vol2}
V.~I.~Bogachev, \emph{Measure theory. {V}ol. {II}}, Springer-Verlag,
  Berlin, 2007. 
  
  

\bibitem{Davies1952}
R.~O.~Davies, \emph{Subsets of finite measure in analytic sets}, Nederl. Akad.
Wetensch. Proc. Ser. A. {\bf 55}, Indagationes Math. \textbf{14} (1952),
488--489.  


\bibitem{LTJST}
C.~Liaw and S.~Treil, \emph{Matrix measures and finite rank perturbations of self-adjoint operators}, 	arXiv:1806.08856 [math.SP], 30 pp, forthcoming at J.~Spectral Th. 
  

\bibitem{Marstrand1954}
J.~M. Marstrand, \emph{Some fundamental geometrical properties of plane sets of
	fractional dimensions}, Proc. London Math. Soc. (3) \textbf{4} (1954),
257--302.

\bibitem{Mattila1975}
P.~Mattila, \emph{Hausdorff dimension, orthogonal projections and
	intersections with planes}, Ann. Acad. Sci. Fenn. Ser. A I Math. \textbf{1}
(1975), no.~2, 227--244. 

\end{thebibliography}
\end{document}